\documentclass[12pt,a4paper]{amsart}
\usepackage{amsmath,enumerate,etoolbox,calc,booktabs,graphicx}
\usepackage[a4paper,centering]{geometry}
\usepackage[mathcal]{eucal}

\patchcmd{\section}{\scshape}{\bfseries}{}{}
\patchcmd{\subsubsection}{\itshape}{\bfseries}{}{}
\makeatletter
\renewcommand{\@secnumfont}{\bfseries}
\makeatother
\numberwithin{equation}{section}
\theoremstyle{plain}
\newtheorem{theorem}{Theorem}
\newtheorem{proposition}{Proposition}
\newtheorem*{proposition*}{Proposition}
\newtheorem{lemma}{Lemma}
\newtheorem{lemmma*}{Lemma}
\newtheorem{corollary}{Corollary}
\theoremstyle{definition}
\newtheorem{definition}{Definition}
\newtheorem*{definition*}{Definition}
\frenchspacing
\newcommand{\sgn}{{\rm sgn}}
\newcommand{\C}{\mathcal C}
\newcommand{\T}{\mathcal T}
\newcommand{\cross}{{\rm cr}}
\newcommand{\Irr}{{\rm Irr}}
\newcommand{\wt}{{\rm wt}}
\newcommand{\blambda}{\boldsymbol\lambda}
\newcommand{\bmu}{\boldsymbol\mu}
\newcommand{\Tab}{{\rm Tab}}
\newcommand{\sh}{{\rm sh}}
\newcommand\ink[1]{\begin{tabular}{@{}c@{}}#1\end{tabular}}
\begin{document}
\ \vspace{-.9in}
\title[Congruences in character tables]{\makebox[0mm]{Congruences in character tables of symmetric groups}}
\author[A.~R.~Miller]{Alexander~R.~Miller}
\begin{abstract}
  If $\lambda$ and $\mu$ are two non-empty Young diagrams
  with the same number of squares,
  and $\blambda$ and $\bmu$ are obtained by dividing each square  
  into $d^2$ congruent squares, then the corresponding character
  value $\chi_{\blambda}(\bmu)$ is divisible by~$d!$.
\end{abstract}
\maketitle
\thispagestyle{empty}
\ \vspace{-.5in}

\section{Introduction}
For any partition $\lambda=1^{m_1}2^{m_2}\ldots n^{m_n}$ of an integer $n$,
let $\chi_\lambda$ be the corresponding irreducible character of the symmetric group $S_n$,
let $\chi_\lambda(\mu)$ be the value at any $\sigma\in S_n$ of cycle type $\mu$,
and, fixing once and for all a positive integer $d$, define partitions 
\[
  d.\lambda = d^{m_1}(2d)^{m_2}\ldots (nd)^{m_n},\qquad
  \blambda=d^{dm_1}(2d)^{dm_2}\ldots (nd)^{dm_n},
\]
so $d.\lambda$ is obtained by scaling the parts of $\lambda$, and 
$\blambda$ is obtained by subdividing the squares of the Young diagram of $\lambda$.
The purpose of this paper is to prove:

\begin{theorem}\label{Thm:main}
  For any two partitions $\lambda$ and $\mu$ of a positive integer, 
  \begin{equation}\label{Mod:Vanish}
    \chi_{\blambda}(\bmu)\equiv 0\pmod{d!}.
  \end{equation}
  More generally, 
  for any partition $\lambda$ of a positive integer $n$, and any 
  partition $\mu$ of $dn$,
    \begin{equation}\label{General:Mod:Vanish}
    \chi_{\blambda}(d.\mu)\equiv 0\pmod{d!}.
  \end{equation}
  For any two partitions $\lambda$ and $\mu$ of a positive integer not divisible by $d$, 
  \begin{equation}\label{Ord:Vanish}
    \chi_{\blambda}(d^2.\mu)=0.
  \end{equation}
\end{theorem}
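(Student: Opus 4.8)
The plan is to reduce the whole theorem to a single divisibility statement about the symmetric-function pairing $\langle p_\mu, s_\lambda^{\,d}\rangle$, and then to conclude from a free $S_d$-action.

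\emph{Step 1: the $d$-quotient of $\blambda$.} Subdividing every square of $\lambda$ into $d^2$ squares replaces each unit step of the boundary path of $\lambda$ by $d$ identical steps; de-interleaving the result into its $d$ runners therefore recovers $d$ copies of the boundary path of $\lambda$, so $\blambda$ has $d$-quotient $(\lambda,\lambda,\dots,\lambda)$ and, by a count of squares, empty $d$-core. Iterating the Murnaghan--Nakayama rule over the parts, in the form that removes $d$-ribbons one abacus-move at a time, then gives for every partition $\mu$ with $|\mu| = d\,|\lambda|$
\[
  \chi_{\blambda}(d.\mu) = \varepsilon\,\bigl\langle p_\mu,\ \underbrace{s_\lambda\, s_\lambda\cdots s_\lambda}_{d}\bigr\rangle, \qquad \varepsilon = \varepsilon_d(\blambda) \in \{\pm1\},
\]
where the sign is immaterial. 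I expect this step --- establishing the $d$-quotient bijection on border-strip tableaux and tracking its signs --- to be the main work; what follows is short.

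\emph{Step 2: expanding the pairing.} Write $\mu = (\mu_1,\dots,\mu_\ell)$; for a map $f\colon\{1,\dots,\ell\}\to\{1,\dots,d\}$ let $\mu^{(i)}$ be the partition with parts $\{\mu_j : f(j)=i\}$. Since every power sum $p_k$ is primitive for the coproduct on symmetric functions, and multiplication is adjoint to comultiplication, $\langle p_\mu, g_1\cdots g_d\rangle = \sum_f\prod_{i=1}^{d}\langle p_{\mu^{(i)}}, g_i\rangle$ for any symmetric functions $g_1,\dots,g_d$. Taking all $g_i = s_\lambda$,
\[
  \chi_{\blambda}(d.\mu) = \varepsilon\sum_f\ \prod_{i=1}^{d}\chi_\lambda\bigl(\mu^{(i)}\bigr),
\]
where $\chi_\lambda(\mu^{(i)})$ is read as $0$ unless $|\mu^{(i)}| = |\lambda|$.

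\emph{Step 3: the free action.} For \eqref{Mod:Vanish} and \eqref{General:Mod:Vanish}, take $\mu\vdash dn$, so that $|\lambda| = n\ge1$: in every nonzero summand each fibre $f^{-1}(i)$ has weight $n$, hence is nonempty, so $f$ is onto. Now $S_d$ acts on the maps $f$ by $\sigma\cdot f = \sigma\circ f$; this only permutes the factors in each product, so it fixes every summand, and on the onto maps it is free, since the stabiliser of $f$ is the subgroup of $S_d$ fixing every element of the image of $f$. Thus the nonzero summands fall into free $S_d$-orbits, each contributing $d!$ times one integer, whence $d!\mid\chi_{\blambda}(d.\mu)$; this proves \eqref{General:Mod:Vanish}, and \eqref{Mod:Vanish} follows since $\bmu = d.\mu'$ with $\mu' = 1^{dm_1}2^{dm_2}\cdots\vdash dn$. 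For \eqref{Ord:Vanish}, apply the formula of Step 2 with $d.\mu$ in place of $\mu$ (legitimate because $d^2.\mu = d.(d.\mu)$ and $|d.\mu| = d\,|\mu| = d\,|\lambda|$): every fibre now has weight divisible by $d$, hence never equal to $|\lambda| = |\mu|$, which by hypothesis is not divisible by $d$; so every summand is $0$ and $\chi_{\blambda}(d^2.\mu) = 0$.
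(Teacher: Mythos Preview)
Your argument is correct and takes a genuinely different route from the paper. The paper builds a self-contained combinatorial framework: it encodes rim hook tableaux as binary matrices called \emph{cascades}, proves the identity $\wt(C)=\sgn(\pi_C)$ for an associated permutation $\pi_C$, and then exhibits an explicit free weight-preserving $S_d$-action on $\C(\blambda,\alpha)$ by permuting each block of $d$ consecutive columns. Weight-preservation is immediate because the action conjugates $\pi_C$; freeness comes from the observation that the first $d$ columns of any cascade in $\C(\blambda,\alpha)$ have pairwise distinct zero-counts. Part \eqref{Ord:Vanish} is obtained by extracting one runner to get a cascade in some $\C(\lambda,d.\mu')$, forcing $d\mid n$.

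You instead invoke the $d$-core/$d$-quotient machinery: since $w(\blambda)$ repeats each letter of $w(\lambda)$ $d$ times, every runner carries a copy of $\lambda$, and the abacus picture reduces $\chi_{\blambda}(d.\mu)$ to $\varepsilon\langle p_\mu,s_\lambda^{\,d}\rangle$; expanding by primitivity of power sums yields a sum over colourings $f$, on which the $S_d$-action $f\mapsto\sigma\circ f$ is transparently weight-preserving and free on surjections. This is shorter and more conceptual \emph{given} the quotient formula, but the step you flag as ``the main work'' --- tracking the sign $\varepsilon$ through the ribbon/quotient bijection so that it is global rather than tableau-dependent --- is precisely the difficulty the paper's $\wt=\sgn(\pi_C)$ identity is designed to absorb. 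In effect the paper reproves that formula in a form that avoids citing quotient theory and yields a tool (cascades as lattice-path Murnaghan--Nakayama) of independent interest; your approach trades that development for a clean appeal to standard symmetric-function results. Both $S_d$-actions are avatars of permuting the $d$ runners, so the underlying symmetry is the same, expressed at different levels.
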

Explicit results like these are rare. Previous results 
include J.~McKay's characterization of 
partitions $\lambda$ of $n$ satisfying 
${\chi_\lambda(1^n)\equiv 0\pmod 2}$ \cite{McKay}, I.~G.~Macdonald's
generalization for $\chi_\lambda(1^n)\equiv 0\pmod p$~\cite{Macdonald},
the corollary of Murnaghan--Nakayama that $\chi_\lambda(\mu)=0$
under certain conditions involving hook lengths \cite{MacdonaldBook}, 
and the relation between ordinary and modular vanishing
given by the fact that Frobenius' formula for
$\chi_\lambda(\mu)$ \cite{Frobenius} implies, for any prime~$p$,
that $\chi_\lambda(\mu)\equiv\chi_\lambda(\nu)\pmod p$
whenever $\nu$ can be obtained from $\mu$ by breaking 
some part into $p$ equal~parts.

There are also general results of Burnside, J.~G.~Thompson, and P.~X.~Gallagher, with
Burnside proving that zeros exist for nonlinear irreducible characters
of a finite group \cite{Burnside}, Thompson modifying Burnside's argument 
with a result of C.~L.~Siegel \cite{Siegel} to show that
each irreducible character
evaluates to zero or a root of unity on more than a third of the group elements \cite{Isaacs},
and Gallagher proving similarly
that more than a third of the irreducible characters 
are zero or a root of unity on any larger than average class \cite{Gallagher}.

A few years ago, 
it was shown that if $\chi\in\Irr(S_n)$ and $\sigma\in S_n$ are chosen at random, 
then $\chi(\sigma)=0$ with probability $\to 1$ as $n\to \infty$ \cite{Miller1}. 
The analogous result for ${\rm GL}(n,q)$ was established
in joint work of the author with Gallagher and Larsen \cite{GLM}.
Larsen and the author subsequently showed that the proportion of
zeros in the character table of a finite simple group of Lie type goes to
$1$ as the rank goes to infinity \cite{LM}. 
The limiting behavior for the proportion of zeros in the character table of $S_n$ is not yet
known, but it was conjectured in \cite{Miller2} that, for any prime~$p$,
the proportion of $p$-divisible entries in the character table of $S_n$ goes to $1$ as $n\to\infty$.\footnote{
  As suggested by the computations in \cite{Miller2},
  the author suspects that the same is true for arbitrary prime powers: if
  $\lambda$ and $\mu$ are chosen uniformly at random from the partitions of $n$, then for any prime power $q$,   
  $\chi_\lambda(\mu)\equiv 0\pmod q$ with probability $\to 1$ as $n\to\infty$.}
The classical results of McKay \cite{McKay} and Macdonald \cite{Macdonald} imply that
the proportion of $p$-divisible entries in the column of degrees $\chi_\lambda(1)$ goes to $1$ as $n\to\infty$,
and recent results of Gluck \cite{Gluck} and Morotti~\cite{Morotti} deal with certain other columns.
Very recently, Peluse \cite{Peluse} established the conjecture for
primes $\leq 13$, and then Peluse and Soundararajan \cite{PS} together
established the full conjecture for all primes. 
So for each prime $p$ we have $\chi_\lambda(\mu)\equiv 0\pmod p$ with probability $\to 1$ as $n\to\infty$.
Theorem~\ref{Thm:main} gives an unexpected stability result that answers the natural question of 
what happens if the shapes $\lambda$ and $\mu$ are naturally dilated with large scale factor: 
for any prime $p$, \eqref{Mod:Vanish} with $d\geq p$ 
implies that $\chi_{\blambda}({\boldsymbol\mu})\equiv 0\pmod p$ for {\it all} partitions $\lambda,\mu$ of a positive integer.

We prove \eqref{Mod:Vanish} and \eqref{General:Mod:Vanish}
by showing that
in the Murnaghan--Nakayama formula for
computing $\chi_{\blambda}(d.\mu)$
as a weighted sum over certain rim hook tableaux,
the relevant rim hook tableaux
admit an action of $S_d$ that is both free and weight-preserving.
This is done by first translating 
from rim hook tableaux to some new objects 
we call {\it cascades}, which are a matrix analogue of 
Com\'et's classical one-line binary notation for partitions, and which 
can be viewed as collections of lattice paths with weight defined in terms of crossings. 
As a benefit of independent interest,  we obtain 
a lattice-path version of Murnaghan--Nakayama in Proposition~\ref{MN:Lattice-Paths}.
Then in Theorem~\ref{Thm:Action} we 
establish an explicit weight-preserving free action of $S_d$
on cascades. As a corollary we obtain
\eqref{Mod:Vanish} and \eqref{General:Mod:Vanish}, 
while \eqref{Ord:Vanish} will come from Proposition~\ref{MN:Lattice-Paths}.

\section{Preliminaries}\label{Sect:Preliminaries}
By {\it partition} of an integer $n\geq 0$ we mean an integer sequence 
${\lambda=(\lambda_1,\lambda_2,\ldots,\lambda_l)}$ satisfying  
${\lambda_1\geq \lambda_2\geq\ldots\geq \lambda_l\geq 1}$ and
$\lambda_1+\lambda_2+\ldots +\lambda_l=n$.
We say $\lambda$ has {\it size} $n$ with $l$ {\it parts}, writing $|\lambda|=n$ and $\ell(\lambda)=l$. 
The alternative shorthand $\lambda=1^{m_1}2^{m_2}\ldots n^{m_n}$ means 
$\lambda$ is the partition with $m_1$ $1$'s, $m_2$ $2$'s, and so on,
e.g.\ $(4,2,1,1)=1^22^14^1$. 

We identify $\lambda$ with its {\it shape} or {\it Young diagram}, i.e.\ 
the left-justified array with $\lambda_1$ squares in
the first row, $\lambda_2$ squares in the second row, and so on,
e.g.\ the partition $(8,6,4,3)$ is identified with the following shape:
\[\includegraphics{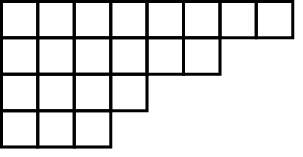}\]

By {\it rim hook} $\rho$ of $\lambda$ we mean the union of a non-empty
sequence of squares
in $\lambda$ such that each square is directly to the left or directly
below the previous square and $\lambda\setminus \rho$ is a Young diagram,
e.g.\ the following is a rim hook of size $7$ in $(8,6,4,3)$:
\[\includegraphics{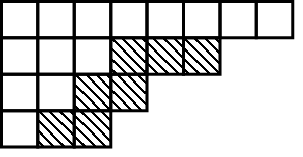}\]

By {\it rim hook tableau} $T$
we  mean a labeling of the squares of a
non-empty Young diagram $\lambda$ with 
integers $1,2,\ldots, m$ such that 
the squares with label $\geq i$ form a
Young diagram $T_i$ and, for $1\leq i\leq m$,
the squares labeled $i$ form a (non-empty) rim hook
of size $\alpha_i$ in~$T_i$.
We say $T$ has {\it shape} $\lambda$ and 
{\it content} $\alpha=(\alpha_1,\alpha_2,\ldots,\alpha_m)$,
we write \[T={\rm Tab}(T_1,T_2,\ldots,T_{m+1}),\]
so $T_1=\lambda$ and $T_{m+1}=\emptyset$, and we 
define the {\it weight} of $T$ by 
\begin{equation}
  \wt(T)=\prod_{i=1}^m (-1)^{\#\{\text{rows of $T$ occupied by }i\}-1}.
\end{equation}
An example rim hook tableau of shape $(8,6,4,3)$
and content $(4,4,6,3,2,2)$ is 
\[
\includegraphics{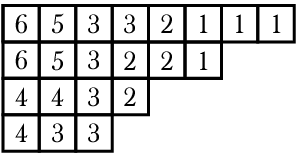}
\]
which has weight  $(-1)^{2-1+3-1+4-1+2-1+2-1+2-1}$. 

Denoting by $\T(\lambda,\alpha)$ the set of all rim hook tableaux
of shape $\lambda$ and content $\alpha=(\alpha_1,\alpha_2,\ldots,\alpha_m)$, 
the mapping
$T\mapsto (T_1,T_2,\ldots,T_{m+1})$
takes
$\T(\lambda,\alpha)$ bijectively onto the set of all partition sequences
$\lambda=\lambda^1,\lambda^2,\ldots ,\lambda^{m+1}=\emptyset$
in which each succeeding $\lambda^{i}$ is obtained 
from the previous partition $\lambda^{i-1}$ 
by removing a rim hook of size $\alpha_{i-1}$, 
so in this way rim hook tableaux serve as shorthand for 
the various ways of going from $\lambda$ to $\emptyset$ by 
successively removing rim hooks of prescribed size.

The {\it Murnaghan--Nakayama formula} \cite{Murnaghan,Nakayama} gives, for any
two partitions $\lambda$ and $\mu$ of a positive integer, 
and any sequence $\alpha$ that can be rearranged to $\mu$, 
\begin{equation}
\chi_\lambda(\mu)=\sum_{T\in \T(\lambda,\alpha)} \wt(T).
\end{equation}

\section{Cascades}\label{Sect:Cascades}
By the {\it word} of a partition $\lambda$
we mean the binary sequence $w(\lambda)$
obtained from $\lambda$ by
writing $0$ under each column, $1$ alongside each row, and   
reading clockwise, e.g.\ the word of $(4,2)$ is $001001$:
\[
\includegraphics{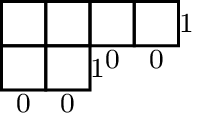}
\]
By the {\it shape} of a binary sequence
$\beta=(\beta_1,\beta_2,\ldots,\beta_k)$ we mean  
the partition
$\sh(\beta)=1^{m_1}2^{m_2}\ldots$
where $m_i$ is the number of
$1$'s in $\beta$ with exactly $i$ $0$'s to the left, e.g.\
both $001001$ and $10010010$ have shape $(4,2)$. The word of a non-empty partition $\lambda$ is
the unique binary sequence of shape $\lambda$ that starts with $0$ and ends with~$1$; the word of the empty partition is the empty sequence.

The standard fact that we require goes back to Com\'et in the 1950's (cf.\ \cite{Comet}) and 
can be stated as follows:
\begin{lemma}\label{Lemma:Comet}
  For any finite binary sequence $\beta$ and integer $k$, 
  the mapping ${\beta'\mapsto \sh(\beta')}$ takes $\mathcal B$,
  the set of $\beta'$ obtainable from $\beta$ by swapping 
  a $0$ with a right-lying $1$ exactly $k$ positions away, bijectively onto
  the set of shapes obtainable from $\sh(\beta)$ by removing a rim hook of size $k$,
  and moreover, the number of rows occupied by the rim hook $\sh(\beta)\setminus \sh(\beta')$
  equals the number of $1$'s lying weakly between the swapped $0$-$1$ pair.\qed
\end{lemma}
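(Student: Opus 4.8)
The plan is to translate the statement into a fact about surgery on lattice paths. To a finite binary word $\beta$ with $c$ zeros and $r$ ones, associate the monotone lattice path $L(\beta)$ from $(0,0)$ to $(c,r)$ that takes a unit east step for each $0$ and a unit north step for each $1$, read left to right. It is straightforward to check that $\sh(\beta)$ is precisely the set of unit cells of the rectangle $[0,c]\times[0,r]$ lying weakly to the northwest of $L(\beta)$: the $t$-th north step of $L(\beta)$ has abscissa equal to the number of $0$'s preceding the $t$-th $1$ of $\beta$, and this both bounds the $t$-th row of that region (counted from the bottom) on the right and, by the definition of $\sh$, is the corresponding part of $\sh(\beta)$. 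In particular $|\sh(\beta)|=\#\{(i,j):i<j,\ \beta_i=0,\ \beta_j=1\}$, immediately from the definition of $\sh$. With this dictionary the lemma becomes a statement about how $L$ responds to a single swap.

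So fix a valid swap: positions $p<q$ with $\beta_p=0$, $\beta_q=1$ and $q-p=k$, and let $\beta'$ be obtained by interchanging the two entries. Then $\beta$ and $\beta'$ agree outside the block of steps $p,p+1,\dots,q$; inside this block the first step is east for $L(\beta)$ but north for $L(\beta')$, the last step is north for $L(\beta)$ but east for $L(\beta')$, and all steps in between coincide. Hence at each lattice point strictly interior to the block $L(\beta)$ is the translate of $L(\beta')$ by $(1,-1)$, while the two paths meet at the two ends of the block. It follows that $L(\beta')$ lies weakly northwest of $L(\beta)$, so $\sh(\beta')\subseteq\sh(\beta)$, and the region trapped between the paths — namely $\sh(\beta)\setminus\sh(\beta')$ — is a connected skew shape with no $2\times2$ square, i.e.\ a rim hook. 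Its size is $|\sh(\beta)|-|\sh(\beta')|$, and a one-line comparison of the inversion counts $\#\{i<j:\beta_i=0,\ \beta_j=1\}$ for $\beta$ and $\beta'$ gives exactly $q-p=k$. Finally, the rows this rim hook occupies are the north steps of $L(\beta)$ among steps $p,\dots,q$ — equivalently, the $1$'s of $\beta$ lying weakly between positions $p$ and $q$ — and the same count results from $\beta'$, since swapping a $0$ with a $1$ preserves the number of $1$'s in any interval containing the swapped pair.

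It then remains to promote this well-definedness into a bijection onto all rim-hook removals of size $k$. Surjectivity is the same picture in reverse: if $\nu$ arises from $\sh(\beta)$ by removing a rim hook of size $k$, then $L(\nu)$ and $L(\beta)$ agree outside one block and, because $\sh(\beta)\setminus\nu$ is a rim hook, differ there by exactly the east/north exchange above; a rim hook of size $k$ occupying $a$ rows occupies $k+1-a$ columns, hence borders $a$ north and $k+1-a$ east steps of $L(\beta)$, so the block has $k+1$ steps and $\nu=\sh(\beta')$ for the swap at the first step $p$ of the block and $q=p+k$, which lies in $\mathcal B$. Injectivity: from $\sh(\beta)$ together with $\sh(\beta')\subseteq\sh(\beta)$ one reads off the removed rim hook, hence its lowest, leftmost cell, hence the step $p$ of $L(\beta)$ at which the swap was made and $q=p+k$; so distinct members of $\mathcal B$ have distinct shapes.

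The step I expect to demand the most care is the geometric heart of the second paragraph — that the region caught between two monotone east/north paths which are $(1,-1)$-translates of one another is genuinely a rim hook (connected and free of a $2\times2$ block), of the asserted size and occupying the asserted rows. Concretely this comes down to the facts that in each row the trapped region is an interval of cells and that consecutive nonempty rows overlap in exactly one column, which together are precisely connectedness and the absence of a $2\times2$; verifying this amounts to bookkeeping how the parts of $\sh(\beta)$ change in terms of the positions of the $1$'s of $\beta$, and one could in principle run the entire proof that way, bypassing the pictures, albeit less transparently.
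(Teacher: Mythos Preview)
The paper does not actually prove this lemma: it is stated as a ``standard fact'' going back to Com\'et, and the \qed\ immediately follows the statement. Your argument is correct and supplies precisely the kind of self-contained verification the paper omits. The lattice-path dictionary you set up (east for $0$, north for $1$, shape as the region northwest of the path) is the standard one, and your key observation --- that a single $0$--$1$ swap at distance $k$ replaces a block of $k+1$ steps by its $(1,-1)$-translate, trapping a ribbon of $k$ cells --- is exactly the geometric content of the lemma. The row count, the size count via $01$-inversions, and the bijection argument (recovering the swap positions from the southeast boundary of the removed rim hook) are all sound. Your closing remark correctly identifies the only point requiring real care: that the trapped region is connected with no $2\times2$ block, which follows because its cells are indexed by the $k$ interior lattice points of a monotone path, and such a path never contains all four corners of a unit square.
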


For example, if $\lambda$ is the partition $(8,6,4,3)$ and $\rho$ is the rim hook of $\lambda$
shown in \S\ref{Sect:Preliminaries}, and if $\beta=11000101001001$,
so that $\sh(\beta)=\lambda$, then the shape $\lambda\setminus \rho$
corresponds to $\beta'=11010101000001$.

\subsection{} Our main tool is the following:
\begin{definition}
  A {\it cascade} is a binary matrix 
  $C$ with 
  rows
  $C_i=(C_{i1},C_{i2},\ldots,C_{il})$,  $1\leq i\leq m$, 
  such that
  \begin{enumerate}[1)]
  \item $C_{11}=0$ and $C_{1l}=1$,
  \item for each row $C_i$ with $1\leq i\leq m-1$, there is a
unique pair $a_i<b_i$ such that 
\[
  C_{i a_i}=0,\quad C_{i b_i}=1,\quad
  C_{i+1}=(C_{i\tau(1)},C_{i\tau(2)},\ldots,C_{i\tau(l)})\quad\text{for}\quad
  \tau=\tau_{C,i}=(a_i\ b_i),
\]
  \item $C_{m}=(1,1,\ldots,1,0,0,\ldots,0)$.
  \end{enumerate}

\noindent
The {\it shape} of $C$ is the shape of $C_1$. 

\noindent
The {\it content} of $C$ is the sequence
\[(b_1-a_1,b_2-a_2,\ldots,b_{m-1}-a_{m-1}).\]
  
\noindent
A {\it crossing} in $C$ is a pair
$(i,j)$ such that 
\[1\leq i\leq m-1,\quad  C_{ij}=1,\quad \text{and}\quad a_i<j<b_i.\]
  
\noindent
The {\it weight} of $C$ is defined by
\[\wt(C)=(-1)^{\cross(C)},\quad
\text{where}\quad \cross(C)=\#\{\text{crossings in $C$}\}.\]

\noindent
The {\it permutation} associated to $C$ is 
\[
  \pi_C = \begin{pmatrix}
    1 & 2 & \ldots & k \\
    \sigma_C(i_1) & \sigma_C(i_2) & \ldots &\sigma_C(i_k)
  \end{pmatrix},
\]
where $i_1<i_2<\ldots<i_k$ are the positions of the $1$'s in the first row of $C$,  and
\[\sigma_C=\tau_{C,m-1}\tau_{C,m-2}\ldots \tau_{C,1}.\]
\end{definition}
We denote by $\C(\lambda,\alpha)$ the set of
cascades of shape $\lambda$ and content $\alpha$.

\begin{lemma}\label{Lemma:Corresp}
  The mapping 
  \begin{equation}
    \Theta:C\mapsto
    \Tab(\sh(C_1),\sh(C_2),\ldots, \sh(C_{\#{\text{\rm rows}}(C)}))
    \end{equation}
  takes the set of cascades bijectively onto the set of
  rim hook tableaux, and it preserves shape, content, and weight.
\end{lemma}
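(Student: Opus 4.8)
The plan is to deduce Lemma~\ref{Lemma:Corresp} from Com\'et's Lemma~\ref{Lemma:Comet} together with the bijection recalled in \S\ref{Sect:Preliminaries} between rim hook tableaux and partition sequences. Since $\Theta$ visibly preserves shape, and the cascades and the rim hook tableaux split as disjoint unions $\bigsqcup_{\lambda,\alpha}\C(\lambda,\alpha)$ and $\bigsqcup_{\lambda,\alpha}\T(\lambda,\alpha)$, it suffices to fix a non-empty $\lambda$ and a sequence $\alpha=(\alpha_1,\ldots,\alpha_k)$ with $\alpha_1+\cdots+\alpha_k=|\lambda|$ and to show that $\Theta$ maps $\C(\lambda,\alpha)$ bijectively and weight-preservingly onto $\T(\lambda,\alpha)$. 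Composing with $T\mapsto(T_1,\ldots,T_{k+1})$ from \S\ref{Sect:Preliminaries}, this becomes: $C\mapsto(\sh(C_1),\ldots,\sh(C_m))$, with $m$ the number of rows of $C$, is a bijection from $\C(\lambda,\alpha)$ onto the set $P(\lambda,\alpha)$ of partition sequences $\lambda=\lambda^1,\ldots,\lambda^{k+1}=\emptyset$ in which each $\lambda^{j+1}$ is obtained from $\lambda^{j}$ by removing a rim hook of size $\alpha_j$, and this bijection matches $\cross$-parity with rim-hook-row-parity. That the map lands in $P(\lambda,\alpha)$ is immediate: for $1\le i\le m-1$ the row $C_{i+1}$ arises from $C_i$ by swapping the $0$ at $a_i$ with the $1$ at $b_i>a_i$, which sits $b_i-a_i$ positions to its right, so by Lemma~\ref{Lemma:Comet} each $\sh(C_{i+1})$ is obtained from $\sh(C_i)$ by removing a rim hook of size $b_i-a_i$; since $\sh(C_1)=\lambda$ and $\sh(C_m)=\emptyset$ (as $C_m=(1,\ldots,1,0,\ldots,0)$ has no $1$ with a $0$ to its left), and the sequence $(b_i-a_i)_i$ is by definition the content $\alpha$ of $C$, we get $m=k+1$ and a valid element of $P(\lambda,\alpha)$.

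Next I would build the inverse explicitly. Given a sequence in $P(\lambda,\alpha)$, set $C_1=w(\lambda)$; this is forced, because the only binary sequence of shape $\lambda$ that begins with $0$ and ends with $1$ is $w(\lambda)$ (\S\ref{Sect:Preliminaries}), and cascade condition~(1) together with the shape requirement forces $C_1$ to be such a sequence, so in particular the common column number $l$ is determined. Inductively, given $C_i$ with $\sh(C_i)=\lambda^i$, the bijective half of Lemma~\ref{Lemma:Comet} yields a \emph{unique} length-$l$ sequence $C_{i+1}$ obtained from $C_i$ by swapping a $0$ with a right-lying $1$ exactly $\alpha_i$ positions away and having $\sh(C_{i+1})=\lambda^{i+1}$. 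After $k$ steps $\sh(C_{k+1})=\emptyset$, which forces $C_{k+1}=(1,\ldots,1,0,\ldots,0)$, so cascade condition~(3) holds automatically while (1) and (2) hold by construction; thus $C=(C_1,\ldots,C_{k+1})\in\C(\lambda,\alpha)$. Applying $\Theta$ returns the given sequence, and conversely any cascade over a fixed sequence in $P(\lambda,\alpha)$ is reconstructed row by row by exactly these forced choices, so this is a two-sided inverse and $\Theta$ is a bijection $\C(\lambda,\alpha)\to\T(\lambda,\alpha)$.

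For the weight I would invoke the second clause of Lemma~\ref{Lemma:Comet}: the number of rows occupied by the $i$-th rim hook $\sh(C_i)\setminus\sh(C_{i+1})$ equals the number of $1$'s of $C_i$ lying weakly between positions $a_i$ and $b_i$. Exactly one of those $1$'s is the swapped $1$ at $b_i$; the remaining ones are the $1$'s of $C_i$ at positions $j$ with $a_i<j<b_i$, i.e.\ the crossings $(i,j)$ of $C$ in row $i$. Hence the exponent $\#\{\text{rows occupied by }i\}-1$ in the definition of $\wt(\Theta(C))$ equals the number of crossings of $C$ in row $i$, and summing over $i=1,\ldots,k$ gives $\wt(\Theta(C))=(-1)^{\cross(C)}=\wt(C)$.

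The main obstacle — really the only non-formal ingredient — is the double use of Com\'et's lemma: its bijectivity clause is what makes the inverse construction forced and single-valued, and its row-counting clause is what converts ``rows of a rim hook'' into ``$1$'s weakly between the swapped pair''; one then has to notice that the ``$-1$'' in the rim-hook weight is exactly what strips off the contribution of the swapped $1$ at position $b_i$, leaving precisely the crossings. The small pitfall to avoid is treating cascade condition~(3) as an additional constraint that the inverse map must be shown to satisfy, rather than an automatic consequence of $\sh(C_{k+1})=\emptyset$.
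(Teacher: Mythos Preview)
Your proof is correct and follows essentially the same route as the paper: both deduce the bijection from Com\'et's Lemma~\ref{Lemma:Comet}, the rim-hook-tableau/partition-sequence correspondence of \S\ref{Sect:Preliminaries}, and the uniqueness of the binary word of a given non-empty shape starting with $0$ and ending with $1$. The only cosmetic difference is that the paper packages the inverse as a closed formula $\Theta^{-1}(T)=\mathrm{Mat}(w_\lambda(T_1),\ldots,w_\lambda(T_{m+1}))$ with $w_\lambda$ the suitably padded word, whereas you build it row by row; your weight argument, pairing the row-count clause of Lemma~\ref{Lemma:Comet} with the ``$-1$'' in the rim-hook weight to isolate the crossings, is exactly the mechanism the paper has in mind.
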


\begin{proof}
  This follows from Com\'et's observation in Lemma~\ref{Lemma:Comet}, 
  the standard facts in \S\ref{Sect:Preliminaries} about rim hook tableaux, 
  and the fact that
  there is a unique binary sequence $\beta$ of a given non-empty shape
  such that $\beta$ starts with $0$ and ends with $1$. In particular, 
  \begin{equation}
    \Theta^{-1}:T\mapsto {\rm Mat}(w_\lambda(T_1),w_\lambda(T_2),w_\lambda(T_3),\ldots,w_\lambda(T_{m+1})),
  \end{equation}
  where
$\lambda=\sh(T_1)$,
$m$ is the largest label in $T$,
$w_\lambda(T_i)$ is the sequence obtained from $w(T_i)$ by appending
  to the start $\ell(\lambda)-\ell(T_i)$  many $1$'s and to the end
  $\lambda_1-T_{i1}$ many $0$'s, and
where ${\rm Mat}(r_1,r_2,\ldots,r_k)$
    with $r_i=(r_{i1},r_{i2},\ldots)$ means the matrix $(r_{ij})$.
\end{proof}

\noindent
{\it Example.}
Consider the following cascade $C$:
\begin{equation}\label{Cascade:Example}
  \small\left({\begin{array}{cccccccccccc}
    0 & 0 & 0 & 1 & 0 & 1 & 0 & 0 & 1 & 0 & 0 & 1\\
    0 & 0 & 0 & 1 & 0 & 1 & 0 & 1 & 1 & 0 & 0 & 0\\
    0 & 0 & 0 & 1 & 1 & 1 & 0 & 1 & 0 & 0 & 0 & 0\\
    0 & 1 & 0 & 1 & 1 & 1 & 0 & 0 & 0 & 0 & 0 & 0\\
    1 & 1 & 0 & 0 & 1 & 1 & 0 & 0 & 0 & 0 & 0 & 0\\
    1 & 1 & 0 & 1 & 1 & 0 & 0 & 0 & 0 & 0 & 0 & 0\\
    1 & 1 & 1 & 1 & 0 & 0 & 0 & 0 & 0 & 0 & 0 & 0
  \end{array}}\right)
\end{equation}
The shape is $(8,6,4,3)$,
the content is $(4,4,6,3,2,2)$,
the weight is $(-1)^{1+2+3+1+1+1}$.
The row shapes $\sh(C_k)$ are: 
\[
\includegraphics{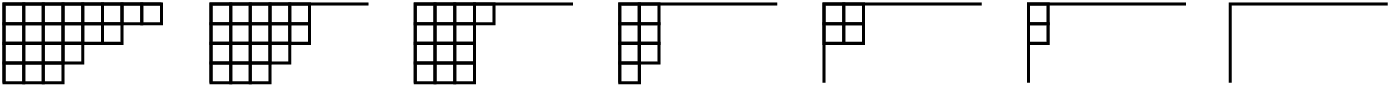}
\]
The corresponding rim hook tableau
$\Tab(\sh(C_1),\sh(C_2),\ldots,\sh(C_7))$ is:
\[
\includegraphics{RH.eps}
\]
The associated permutation 
 $ \pi_C$ is the transposition $(2\ 4)$ in $S_4$.

\subsection{} 
We define a {\it path} in a cascade $C=(C_1,C_2,\ldots,C_m)$
to be a sequence of column positions 
${p=(p_1,p_2,\ldots, p_m)}$, one position $p_i$ for each row $C_i$, 
such that
\[
  C_{1p_1}=1\quad\text{and}\quad 
  p_{i+1}=\tau_{C,i}(p_i)\quad\text{for}\quad
  1\leq i\leq m-1.
\]
We say $p$ {\it starts} at $p_1$ and {\it ends} at $p_m$.
There is exactly one path for each $1$ in the first row of $C$, 
and we agree to always number the paths $p^1,p^2,p^3,\ldots$ according to
relative start position, so that $p^1_1<p^2_1<p^3_1<\dots$.
With this convention, 
\begin{equation}
  \pi_C(i)=p^i_m,\quad i=1,2,\dots.
\end{equation}
By a {\it crossing} of paths $p,p'$ in $C$ we mean a
pair $(i,j)$ with $1\leq i\leq m-1$ such that 
\[p_i=j,\quad
p_i<p_i',\quad \text{and}\quad 
p_{i+1}'<p_{i+1}.\]
\begin{lemma} For a cascade $C$ with paths $p^1,p^2,\ldots,p^k$,
  \begin{equation}\label{cr:decomp}
     \{\text{\rm crossings in $C$}\}
     =
    \dot{\bigcup}_{1\leq i<j\leq k} \left\{\text{\rm crossings of $p^i$ and $p^j$}\right\}.
    \end{equation}
  \end{lemma}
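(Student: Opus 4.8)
The plan is to match up crossings of $C$ with crossings of pairs of paths one at a time: given a crossing $(i,j)$ of $C$, produce the unique pair of paths that cross there, and conversely show that every crossing of two paths arises this way. The uniqueness of the pair will simultaneously give the set equality and the disjointness of the union on the right.

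First I would record the bookkeeping fact that, for every row index $i$, the positions $\{p^1_i,\dots,p^k_i\}$ of the paths in row $C_i$ are exactly the columns in which $C_i$ has a $1$. This is immediate by induction on $i$: it holds in the first row by the definition of a path, and it is preserved in passing from $C_i$ to $C_{i+1}$ because the same bijection $\tau_{C,i}=(a_i\ b_i)$ is applied both to the $1$-columns of $C_i$ and to the values $p^t_i$. Two consequences will be used throughout: distinct paths occupy distinct columns in every row, so for each $(i,j)$ with $C_{ij}=1$ there is a unique path through $(i,j)$; and the column $b_i$, where $C_{ib_i}=1$, is visited by a unique path, which $\tau_{C,i}$ carries to column $a_i$ in row $i+1$.

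For the inclusion from left to right, let $(i,j)$ be a crossing of $C$, so $C_{ij}=1$ and $a_i<j<b_i$. Let $p$ be the path with $p_i=j$ and $p'$ the path with $p'_i=b_i$. Since $a_i<j<b_i$, the transposition $\tau_{C,i}$ fixes $j$, hence $p_{i+1}=j$, while $p'_{i+1}=a_i$; thus $p_i=j<b_i=p'_i$ and $p'_{i+1}=a_i<j=p_{i+1}$, so $(i,j)$ is a crossing of the two distinct ($j<b_i$) paths $p$ and $p'$. Conversely, suppose $(i,j)$ is a crossing of two paths $p,p'$ with $p_i=j<p'_i$ and $p'_{i+1}<p_{i+1}$. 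The bookkeeping fact gives $C_{ij}=1$, so $j\neq a_i$; and $j\neq b_i$, for otherwise $p_{i+1}=a_i$ while $p'_i>b_i$ is fixed by $\tau_{C,i}$, giving $p'_{i+1}=p'_i>b_i>a_i=p_{i+1}$, a contradiction. Hence $\tau_{C,i}$ fixes $j$ and $p_{i+1}=j$, so $p'_{i+1}<j<p'_i$; since $\tau_{C,i}$ only interchanges $a_i$ and $b_i$, the only way a column above $j$ can be sent below $j$ is $p'_i=b_i$ (the alternative $p'_i=a_i$ is impossible, as it would force $p'_{i+1}=b_i>a_i=p'_i$), and then $a_i=p'_{i+1}<p_{i+1}=j<p'_i=b_i$, so $(i,j)$ is a crossing of $C$.

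For the disjointness of the union, the converse argument above shows more: any two paths crossing at $(i,j)$ must be exactly the path through column $j$ in row $i$ and the path through column $b_i$ in row $i$, and both of these are determined by $(i,j)$ together with the data of $C$. So each crossing of $C$ lies in precisely one of the sets on the right-hand side. I expect the only delicate point to be the little case analysis in the converse inclusion---ruling out that a path-crossing sits in column $a_i$ or $b_i$---but this reduces to the observation that $\tau_{C,i}$ moves only two columns, and relative to any column strictly between $a_i$ and $b_i$ it moves $a_i$ and $b_i$ in opposite, fixed directions.
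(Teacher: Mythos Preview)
Your proof is correct and is essentially a fully spelled-out version of the paper's own argument, which consists of the single line ``By comparing definitions.'' You have carefully unpacked that comparison: the bookkeeping fact that in each row the paths occupy exactly the $1$-columns, the explicit identification of the unique pair of paths (the one through column $j$ and the one through column $b_i$) that cross at a given crossing $(i,j)$ of $C$, and the converse case analysis showing that any path-crossing forces $a_i<j<b_i$. One tiny redundancy: in ruling out $p'_i=a_i$ you could simply note $C_{i a_i}=0$, so no path sits there; your alternative parity-of-direction argument is also fine.
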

  \begin{proof}
    By comparing definitions.
  \end{proof}

\subsection{}
It is often convenient to visualize a cascade by constructing an associated graph.

\begin{definition}
  The {\it diagram} or {\it graph} of a cascade is obtained
  by replacing each $1$ by a node, each 
  $0$ by an empty space ``\,$\cdot$\,'', and then connecting 
  any two nodes $x,y$ that occupy adjacent rows
 and either share a single column or occupy the two columns
 where the two rows differ.
 \end{definition}

 \noindent
 {\it Example.} The diagram of the cascade in \eqref{Cascade:Example} is:
\[
\includegraphics{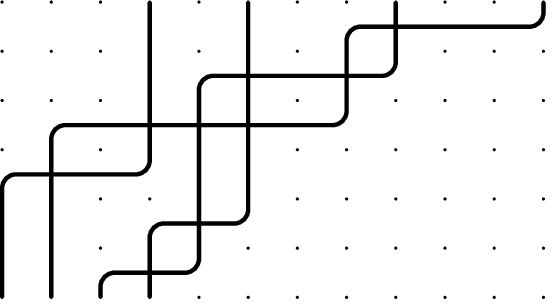}
\]
The paths of the cascade are 
\begin{alignat*}{2}
  p^1&=(4,4,4,4,1,1,1),&\quad
  p^2&=(6,6,6,6,6,4,4),\\
  p^3&=(9,9,5,5,5,5,3),&\quad
  p^4&=(12,8,8,2,2,2,2).
\end{alignat*}
There are $9$ crossings in total, e.g.\ $p^3$ and $p^4$ cross $3$ times.
And the permutation
\[
  \pi_C=
  \begin{pmatrix}
    1 & 2 & 3 & 4 \\
    1 & 4 & 3 & 2
  \end{pmatrix}
\]
can be read off from the diagram by
numbering the nodes in the top row, from left to right, $1,2,\dots$, 
doing the same in the bottom row, and then chasing
through the diagram from top to bottom:
\[
\includegraphics{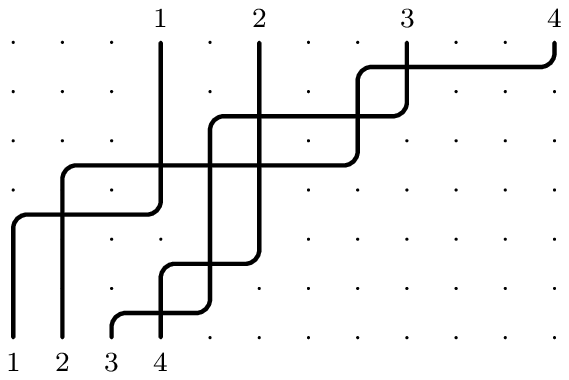}
\]

\subsection{}
Denote by $\sgn(\sigma)$ the ${\it sign}$
of a permutation $\sigma$, so that 
\[
  \sgn(\sigma)=(-1)^{\iota(\sigma)},\quad
  \iota(\sigma)=\#\{\text{pairs $i<j$ with  
  $\sigma(j)<\sigma(i)$}\}.
\]

\begin{lemma}\label{Lemma:Wt:Sgn}
  For any cascade $C$, we have
  \begin{equation}
    \wt(C)=\sgn(\pi_C).
  \end{equation}
\end{lemma}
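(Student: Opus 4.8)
The plan is to track how the sign of $\pi_C$ changes as we build up the cascade row by row, and match this against the parity contribution of the crossings introduced at each step. More precisely, I would use the path description: each path $p^i$ visits column $p^i_k$ in row $k$, and $\pi_C(i)=p^i_m$ is the ending column. Since the starting positions are ordered $p^1_1<p^2_1<\cdots<p^k_1$, the quantity $\iota(\pi_C)$ counts pairs $i<j$ such that path $p^i$ ends to the right of path $p^j$, i.e.\ pairs of paths whose relative order (left-to-right) at the top is opposite to their relative order at the bottom. I want to show this parity equals $\cross(C)\bmod 2$, and by the decomposition \eqref{cr:decomp} it suffices to show, for each fixed pair $i<j$, that the number of crossings of $p^i$ and $p^j$ has the same parity as the indicator of ``$p^i$ and $p^j$ end in the wrong order.''

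So the main step reduces to a two-path statement: for two paths $p=p^i$ and $p'=p^j$ with $p_1<p'_1$, the parity of $\#\{\text{crossings of }p\text{ and }p'\}$ equals $1$ if $p_m>p'_m$ and $0$ if $p_m<p'_m$. (They can never end at the same column since the $p^i_m$ are distinct.) To see this, consider the sign of $p'_k - p_k$ as $k$ runs from $1$ to $m$. It starts positive. I claim it can only change when a crossing of $p$ and $p'$ occurs, and at each such crossing it flips. Indeed, from row $k$ to row $k+1$ the columns are permuted by the transposition $\tau_{C,k}=(a_k\ b_k)$. If neither $p_k$ nor $p'_k$ lies in $\{a_k,b_k\}$, both are unchanged and the sign of the difference is unchanged. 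If exactly one of them, say, moves, the other is fixed, and a short case check (using $a_k<b_k$ and the definition of a crossing, which requires $p_k=a_k$ or similar position data) shows the difference changes sign precisely in the crossing cases and otherwise keeps its sign — here I need to be a little careful about the several sub-cases depending on which of $p_k,p'_k$ equals $a_k$ or $b_k$. Since both positions can't move simultaneously (a transposition moves two values, and if $\{p_k,p'_k\}=\{a_k,b_k\}$ then they simply swap, which with $p_k<p'_k$ forces $p_k=a_k,p'_k=b_k$, and one checks this is not a crossing and leaves the sign of the difference unchanged — the difference $p'-p$ goes from $b_k-a_k>0$ to $a_k-b_k<0$, so actually the sign DOES flip; I need to double-check whether this configuration counts as a crossing or whether it's impossible, recalling a crossing needs a $1$ strictly between, i.e.\ $C_{ij}=1$ with $a_i<j<b_i$). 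Resolving exactly which local configurations flip the sign, and confirming these are exactly the crossings, is the heart of the argument.

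Granting the two-path claim, the proof finishes immediately: $\iota(\pi_C) = \sum_{i<j} [\,p^i\text{ and }p^j\text{ end in wrong order}\,] \equiv \sum_{i<j} \#\{\text{crossings of }p^i\text{ and }p^j\} = \cross(C) \pmod 2$ by \eqref{cr:decomp}, hence $\sgn(\pi_C) = (-1)^{\iota(\pi_C)} = (-1)^{\cross(C)} = \wt(C)$.

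The main obstacle I anticipate is the bookkeeping in the local case analysis of how $\mathrm{sgn}(p'_k - p_k)$ evolves across a single transposition step: one must enumerate the positions of $p_k$ and $p'_k$ relative to $a_k$ and $b_k$ (including whether they coincide with these or lie outside the interval $[a_k,b_k]$), verify in each case whether $(k,\min(p_k,p'_k))$ satisfies the crossing definition, and confirm the sign of the difference flips exactly in the crossing cases. This is elementary but requires care to be exhaustive; once it is done correctly, everything else is formal. An alternative, perhaps cleaner route would be to induct directly on $m$ (the number of rows), peeling off the last transposition $\tau_{C,m-1}$ and relating $\pi_C$ to the analogous permutation of the cascade with the last row removed, but this runs into the annoyance that deleting a row changes which $1$'s are ``active'' and hence re-indexes the paths, so I expect the path-difference-sign argument above to be the most transparent.
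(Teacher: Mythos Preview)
Your approach is correct and is essentially the paper's own proof: both reduce via \eqref{cr:decomp} to the two-path parity claim, and then conclude $\iota(\pi_C)\equiv\cross(C)\pmod 2$. The paper simply states the two-path claim as evident (a pair of paths that start in one left--right order and end in the other must cross an odd number of times, else an even number), whereas you spell out a sign-tracking argument for it.

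The configuration you worry about, $\{p_k,p'_k\}=\{a_k,b_k\}$, cannot occur. An easy induction shows every path stays on $1$'s: from $C_{i+1,j}=C_{i,\tau_{C,i}(j)}$ and $p_{i+1}=\tau_{C,i}(p_i)$ one gets $C_{i+1,p_{i+1}}=C_{i,p_i}$, hence $C_{k,p_k}=1$ for all $k$. Since $C_{k,a_k}=0$, no path sits at column $a_k$ in row $k$, so at most one of $p_k,p'_k$ equals $b_k$. Your local analysis then collapses to three straightforward cases (neither moves; the left path is at $b_k$; the right path is at $b_k$), and one checks that the sign of $p'_k-p_k$ flips in exactly the last case with $a_k<p_k<b_k$, which is precisely the crossing condition.
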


\begin{proof}
  Consider the paths $p^1,p^2,\ldots,p^k$ in $C=(C_1,C_2,\ldots,C_m)$,
  numbered so $\pi_C(i)=p^i_m$, and let
  $\cross(p^i,p^j)$ be the number of crossings of
  $p^i$ and $p^j$, so by \eqref{cr:decomp}, 
  \begin{equation}\label{cr:eq}
    \cross(C)=\sum_{1\leq i<j\leq k} \cross(p^i,p^j).
  \end{equation}
  Fix a pair $i<j$, so $p^i$ starts left of $p^j$.
  If $\pi_C(j)<\pi_C(i)$, then $p^i$ ends to the right of $p^j$,
  so $p^i$ and $p^j$ must have an odd number of crossings; 
  if $\pi_C(i)<\pi_C(j)$, then $p^i$ ends to the left of $p^j$, 
  so $p^i$ and $p^j$ must have an even number of crossings.
  Hence
  \begin{equation}\label{inv:cong}
    \iota(\pi_C)
    \equiv
    \sum_{1\leq i<j\leq k} \cross(p^i,p^j) \pmod{2}.
  \end{equation}
  By \eqref{cr:eq} and \eqref{inv:cong}, we have 
  $\cross(C)\equiv \iota(\pi_C)\pmod{2}$,  so 
  $\wt(C)=\sgn(\pi_C)$.
\end{proof}

As a corollary, 
we have the following useful reformulation of Murnaghan--Nakayama:

\begin{proposition}\label{MN:Lattice-Paths}
  For any two partitions $\lambda$ and $\mu$ of a positive integer, and any
  sequence $\alpha$ that can be rearranged to $\mu$, we have 
\begin{equation}
\chi_\lambda(\mu)=\sum_{C\in \C(\lambda,\alpha)}\wt(C),\quad \wt(C)=(-1)^{\cross(C)}=\sgn(\pi_C),
\end{equation}
where $\C(\lambda,\alpha)$ is
the set of cascades of shape $\lambda$ and content $\alpha$.
\end{proposition}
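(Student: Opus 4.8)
The plan is to obtain the proposition as an immediate corollary of the classical Murnaghan--Nakayama formula together with the two lemmas already proved, so that the whole argument amounts to transporting the rim-hook-tableau sum across the bijection $\Theta$. First I would recall Murnaghan--Nakayama in the form stated in \S\ref{Sect:Preliminaries}: for any sequence $\alpha$ that rearranges to $\mu$, one has $\chi_\lambda(\mu)=\sum_{T\in\T(\lambda,\alpha)}\wt(T)$. The only thing left to do is rewrite the right-hand side as a sum over cascades.

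Next I would invoke Lemma~\ref{Lemma:Corresp}, according to which $\Theta$ is a bijection from the set of all cascades onto the set of all rim hook tableaux that preserves shape, content, and weight. Restricting to fixed shape $\lambda$ and fixed content $\alpha$, this gives a weight-preserving bijection $\Theta\colon\C(\lambda,\alpha)\to\T(\lambda,\alpha)$, whence $\sum_{T\in\T(\lambda,\alpha)}\wt(T)=\sum_{C\in\C(\lambda,\alpha)}\wt(\Theta(C))=\sum_{C\in\C(\lambda,\alpha)}\wt(C)$. Chaining this with the Murnaghan--Nakayama identity yields $\chi_\lambda(\mu)=\sum_{C\in\C(\lambda,\alpha)}\wt(C)$, as required. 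For the two displayed evaluations of $\wt(C)$: the identity $\wt(C)=(-1)^{\cross(C)}$ is simply the definition of the weight of a cascade, and $\wt(C)=\sgn(\pi_C)$ is precisely Lemma~\ref{Lemma:Wt:Sgn}. So the proof is complete once these are cited.

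I do not expect a genuine obstacle here; the content has been front-loaded into Lemma~\ref{Lemma:Corresp} (which rests on Com\'et's Lemma~\ref{Lemma:Comet} and the uniqueness of the word of a nonempty shape) and into Lemma~\ref{Lemma:Wt:Sgn} (the crossing-versus-inversion parity count). The one point worth a moment's care is the matching of conventions: that $\Theta$ really maps $\C(\lambda,\alpha)$ into $\T(\lambda,\alpha)$ for the \emph{same} $\alpha$, which is the content-preservation clause of Lemma~\ref{Lemma:Corresp}, and that the Murnaghan--Nakayama sum does not depend on which rearrangement $\alpha$ of $\mu$ is chosen — both already granted in the excerpt. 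Given all this, the proof will be only a few lines.
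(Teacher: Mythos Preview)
Your proposal is correct and matches the paper's own proof, which simply cites Lemmas~\ref{Lemma:Corresp} and~\ref{Lemma:Wt:Sgn} (together with the Murnaghan--Nakayama formula recalled in \S\ref{Sect:Preliminaries}). The only difference is that you spell out explicitly the chain $\chi_\lambda(\mu)=\sum_{T}\wt(T)=\sum_{C}\wt(C)$ and the roles of the definition of $\wt(C)$ and of Lemma~\ref{Lemma:Wt:Sgn}, whereas the paper compresses this to a one-line citation.
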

\begin{proof}
By Lemmas \ref{Lemma:Corresp} and \ref{Lemma:Wt:Sgn}.
\end{proof}

\section{Proof of Theorem~\ref{Thm:main}}\label{Sect:Proof}
\subsection*{An action on cascades}
The main object of this section is to prove the following:
\begin{theorem}\label{Thm:Action}
  Let $\lambda$ be a partition of a positive integer $n$,
  so $\blambda$ is a partition of $d^2n$, 
  and let $\alpha=(\alpha_1,\alpha_2,\ldots, \alpha_m)$
  be a sequence of positive $d$-divisible integers summing to $d^2n$. 

  Define a pairing $\sigma.C$ on $S_d\times \C(\blambda,\alpha)$ by
  \begin{equation}\label{action}
    (\sigma,C)\mapsto C\Phi(\sigma)^{-1},
  \end{equation}
  where $\Phi(\sigma)$ is the block-diagonal matrix 
  \begin{equation*}
  \Phi(\sigma)=
  \begin{pmatrix}
    \phi(\sigma) & & & \\
    & \phi(\sigma) & &\\
    & & \ddots &\\
    & & & \phi(\sigma)
  \end{pmatrix}
\end{equation*}
with $\lambda_1+\ell(\lambda)$ copies of the $d$-by-$d$
permutation matrix $\phi(\sigma)=(\delta_{i\sigma(j)})$
on the diagonal.
   \begin{enumerate}[\rm(i)]
   \item\label{Thm:action} The pairing $\sigma.C$ 
    is an action of $S_d$ on $\C(\blambda,\alpha)$,
    \item\label{Thm:free} the action is free,
    \item\label{Thm:weight} the action is weight-preserving, i.e.\ 
      $\wt(\sigma.C)=\wt(C)$ for all
      $\sigma$ and $C$.
  \end{enumerate}
\end{theorem}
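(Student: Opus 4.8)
The plan is to verify the three claims in order, each time unpacking the defining conditions (1)--(3) of a cascade and the definitions of path, crossing, and associated permutation. The central observation to establish first is a compatibility between the column permutation $\Phi(\sigma)$ and the transpositions $\tau_{C,i}$: if $C$ has swap-pair $(a_i,b_i)$ in row $i$, then because $\alpha_i=b_i-a_i$ is divisible by $d$, the positions $a_i$ and $b_i$ lie in the \emph{same} slot within their respective $d$-blocks, i.e.\ $a_i\equiv b_i\pmod d$. Consequently $\Phi(\sigma)$ commutes with $\tau_{C,i}$ as permutations of the columns (both permute within cosets mod $d$ in a compatible way — more precisely $\tau_{C,i}$ connects the $r$-th entries of two blocks while $\Phi(\sigma)$ permutes entries within every block by the same $\phi(\sigma)$, so conjugating $\tau_{C,i}=(a_i\ b_i)$ by $\Phi(\sigma)$ gives $(\phi(\sigma)^{-1}a_i\ \phi(\sigma)^{-1}b_i)$, which is again a valid swap-pair for the permuted row). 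I expect \emph{this $d$-divisibility $\Rightarrow$ same-residue} step to be the technical heart of the whole theorem; everything else is bookkeeping built on top of it.

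For (\ref{Thm:action}): I would check that $C\Phi(\sigma)^{-1}$ is again a cascade of shape $\blambda$ and content $\alpha$. Condition (1) holds because $\phi(\sigma)$ fixes the global first and last columns up to the known boundary shape of $\blambda$ (one must check $(C\Phi(\sigma)^{-1})_{11}=0$ and the last entry is $1$ using that the first row of $C$ is the word $w_{\blambda}$ padded appropriately — in fact it is cleaner to note that $\blambda$'s word begins with $0$'s and ends with $1$'s in full blocks, so any within-block permutation preserves the required pattern). Condition (3) is similar, since $C_m=(1,\ldots,1,0,\ldots,0)$ with the break occurring between full $d$-blocks, hence is fixed by $\Phi(\sigma)$. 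Condition (2) is exactly where the commutation above is used: writing $C'=C\Phi(\sigma)^{-1}$, one has $C'_i=C_i\phi(\sigma)^{-1}$ and $C'_{i+1}=C_{i+1}\phi(\sigma)^{-1}=C_i\tau_{C,i}\phi(\sigma)^{-1}=C_i\phi(\sigma)^{-1}(\phi(\sigma)\tau_{C,i}\phi(\sigma)^{-1})=C'_i\tau'$, where $\tau'=(a_i'\ b_i')$ with $a_i'=\phi(\sigma)(a_i)$, $b_i'=\phi(\sigma)(b_i)$ (reading $\phi(\sigma)$ as acting on block-internal indices); uniqueness of the swap-pair transfers from $C$ to $C'$, and $b_i'-a_i'=b_i-a_i=\alpha_i$ since the block and the residue are preserved. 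Then $\Phi(\text{id})$ is the identity and $\Phi(\sigma\tau)=\Phi(\sigma)\Phi(\tau)$ since $\phi$ is a homomorphism, so $(\sigma,C)\mapsto C\Phi(\sigma)^{-1}$ is a genuine right-then-inverted, i.e.\ left, action.

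For (\ref{Thm:weight}): by Lemma~\ref{Lemma:Wt:Sgn} it suffices to show $\pi_{\sigma.C}=\pi_C$, or at least that it has the same sign. From the path description, the paths of $C'=C\Phi(\sigma)^{-1}$ are obtained from those of $C$ by applying $\phi(\sigma)^{-1}$ to every column position simultaneously in every row; since $\phi(\sigma)$ acts the same way in each row and within each block, two paths cross in $C'$ at step $i$ exactly when the corresponding two paths cross in $C$ at step $i$ (the relative order of $p_i,p_i'$ and of $p_{i+1},p_{i+1}'$ is governed by which block they sit in together with their residues, and the global block-order is untouched; within a single block a crossing is detected by $\tau_{C,i}$, which we have just seen is conjugated, not destroyed). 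Hence $\cross(C')=\cross(C)$ directly, giving $\wt(\sigma.C)=\wt(C)$ without even passing through permutations — and in fact $\pi_{C'}=\pi_C$ as well, because the start/end columns of each path lie in the full $0$/$1$ boundary blocks that $\Phi(\sigma)$ fixes. For (\ref{Thm:free}): suppose $\sigma.C=C$; then $C\phi(\sigma)^{-1}=C$ as matrices, so every row $C_i$ is fixed by $\phi(\sigma)^{-1}$, i.e.\ by $\Phi(\sigma)$. It is enough to produce \emph{one} row (or column block) on which the pattern of $0$'s and $1$'s forces $\phi(\sigma)=\mathrm{id}$. The natural candidate is an early row where the entries within some block are not all equal — such a block exists because the content integers $\alpha_i$ are positive and the word $w_{\blambda}$ of a nonempty $\lambda$ genuinely mixes $0$'s and $1$'s inside blocks (here one invokes $|\lambda|\ge 1$); a permutation matrix fixing a $0/1$ vector with both a $0$ and a $1$ in prescribed distinct positions must fix those positions, and iterating over all such rows pins down $\phi(\sigma)$ completely. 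The one place to be careful is the degenerate-looking possibility that \emph{every} block in \emph{every} row is constant, which would make the action trivial-but-not-free; I would rule this out by exhibiting explicitly, from condition (2) and $\alpha_1>0$, a row in which some block is non-constant. I expect this freeness bookkeeping to be the second-most delicate point, after the residue lemma.
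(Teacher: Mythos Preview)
Your argument for part~(i) is essentially the paper's: the key point that $a_i\equiv b_i\pmod d$ forces the swap to respect the block structure is exactly right, and the rest is bookkeeping.

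Part~(iii), however, has a genuine gap. Your claim that ``two paths cross in $C'$ at step $i$ exactly when the corresponding two paths cross in $C$ at step $i$'' is false, and so is the stronger claim $\pi_{C'}=\pi_C$. Concretely, take $d=2$, $\lambda=(1)$, $\alpha=(2,2)$; the two cascades
\[
A=\begin{pmatrix}0&0&1&1\\1&0&0&1\\1&1&0&0\end{pmatrix},\qquad
B=\begin{pmatrix}0&0&1&1\\0&1&1&0\\1&1&0&0\end{pmatrix}
\]
satisfy $(1\,2).A=B$, yet $\cross(A)=0$ while $\cross(B)=2$. The problem is that two paths in different residue classes can sit in the \emph{same} $d$-block at some row, and then the within-block permutation $\phi(\sigma)$ may flip their relative order there while leaving it intact elsewhere, creating or destroying crossings in pairs. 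What survives is only the parity: the paper shows $\pi_{\sigma.C}=\omega\,\pi_C\,\omega^{-1}$ for an explicit $\omega\in S_{d\ell(\lambda)}$ (the paths are reindexed, not fixed), whence $\sgn(\pi_{\sigma.C})=\sgn(\pi_C)$ and Lemma~\ref{Lemma:Wt:Sgn} gives the result. Your argument that ``the start/end columns lie in full boundary blocks that $\Phi(\sigma)$ fixes'' confuses the \emph{set} of start columns (which is fixed) with the individual columns (which are permuted within each block).

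Part~(ii) is also not yet a proof. The assertion that ``$w_{\blambda}$ \ldots\ genuinely mixes $0$'s and $1$'s inside blocks'' is wrong: by construction $w(\blambda)$ repeats each letter of $w(\lambda)$ exactly $d$ times, so every block of row~$1$ is constant. And even once you find \emph{some} non-constant block in \emph{some} row, a single $0/1$ vector never has trivial stabilizer in $S_d$ for $d\ge2$, so ``iterating over all such rows'' needs an actual argument. The paper's device is clean: in the \emph{first} $d$ columns one checks (using $a_i<b_i$ and $d\mid b_i-a_i$, so $b_i$ can never land in $\{1,\dots,d\}$) that each column flips $0\to1$ exactly once and never back, at pairwise distinct rows; hence the zero-counts $z_1(C),\dots,z_d(C)$ are all distinct, and $\sigma.C=C$ forces $\sigma$ to fix this tuple, so $\sigma=1$.
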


\begin{proof}
  Assume $\C(\blambda,\alpha)\neq \emptyset$. Let $l=\lambda_1+\ell(\lambda)$ and $L=dn+d\ell(\lambda)$.
  
  The word of $\lambda$ starts with $0$, ends with $1$,
  and consists of $\lambda_1$ $0$'s and $\ell(\lambda)$ $1$'s,
  so the sequence $w(\lambda)$ has length $l$.
  The word of $\blambda$ is obtained by replacing in $w(\lambda)$ each
  $0$ by $d$ consecutive $0$'s and each $1$ by $d$ consecutive $1$'s, so
  $w(\blambda)$ starts with $d$ $0$'s, ends with $d$ $1$'s, has length $L$,
  and writing $w(\blambda)=(w_1,w_2,\ldots, w_L)$, 
  \begin{equation}\label{block:eq}
    w_{1+dk}=w_{2+dk}=\ldots =w_{d+dk},\quad 0\leq k\leq L/d-1.
  \end{equation}
  In particular, each $C\in\C(\blambda,\alpha)$
  has $L$ columns, so the matrix multiplication
  on the right-hand side of \eqref{action} makes sense,
  and multiplying $C$ on the right by $\Phi(\sigma)^{-1}$ permutes
  the first $d$ columns of $C$, the next $d$ columns of $C$,
  and so on:
  denoting by ${\rm Col}_i(C)$ the $i$-th column of $C$, we have
  \begin{equation}\label{perm:eq}
    {\rm Col}_{i+dk}(C)={\rm Col}_{\sigma(i)+dk}(\sigma.C)
  \end{equation}
  for $1\leq i\leq d$ and $0\leq k \leq L/d-1$.
  
  \subsection*{\eqref{Thm:action}}
Fix $C\in\C(\blambda,\alpha)$ and $\sigma\in S_d$.
  Let
  $C'=\sigma.C$. 
  By \eqref{block:eq} and~\eqref{perm:eq}, 
  \begin{equation}\label{first:row}
    C_1'=C_1.
  \end{equation}
  The last row of $C$ is $C_m=(1,\ldots,1,0,\ldots,0)$,
  with $d\ell(\lambda)$ $1$'s, so by  \eqref{perm:eq},
  \begin{equation}\label{last:row}
    C_m'=C_m.
  \end{equation}
  By \eqref{first:row}, \eqref{last:row}, and $C$ being a cascade, $C'$
  satisfies the first and third cascade conditions.

  Let $C_i'$ and $C_{i+1}'$ be two consecutive rows in $C'$.
  Since $C$ is a cascade, the rows 
  $C_i$ and $C_{i+1}$ differ in exactly two positions, $a_i$ and $b_i$ with $a_i<b_i$, and 
  \begin{equation*}
    C_{i,a_i}=0,\quad C_{i,b_i}=1,\quad
    C_{i+1,a_i}=1,\quad C_{i+1,b_i}=0.
  \end{equation*}
  Since the difference $\alpha_i=b_i-a_i$ is positive and divisible by $d$, 
  \begin{equation}\label{ab}
    a_i=r_i+ds_i\quad\text{and}\quad b_i=r_i+dt_i
  \end{equation}
  for some non-negative integers $r_i,s_i,t_i$ with
  $1\leq r_i\leq d$ and $s_i< t_i$. 
  Setting  
  \begin{equation}\label{apbp}
    a_i'=\sigma(r_i)+ds_i\quad\text{and}\quad b_i'=\sigma(r_i)+dt_i, 
  \end{equation}
  and using \eqref{perm:eq}, we have that 
  $C_i'$ and $C_{i+1}'$ differ in exactly positions $a_i'$ and $b_i'$, and
   \begin{equation*}
    C_{i,a_i'}'=0,\quad C_{i,b_i'}'=1,\quad
    C_{i+1,a_i'}'=1,\quad C_{i+1,b_i'}'=0.
  \end{equation*}
  Since $s_i<t_i$, we also have that $a_i'<b_i'$.
  So $C'$ satisfies the second condition of a cascade. Hence $C'$ is a cascade.
  
  By \eqref{first:row}, the shape of the cascade $C'$ is $\blambda$. 
  The content of $C'$ is $(b_1'-a_1',b_2'-a_2',\ldots)$,
  which by \eqref{ab} and \eqref{apbp} equals $\alpha$.  
  So $C'\in\C(\blambda,\alpha)$. This concludes the proof of~\eqref{Thm:action}.
  
  \subsection*{\eqref{Thm:free}}
  Let
  $z_i(C)$ be the number of $0$'s in the $i$-th column of a cascade $C\in \C(\blambda,\alpha)$.
  Let 
  \begin{equation}
    z(C)=(z_1(C),z_2(C),\ldots, z_d(C)).
  \end{equation}
  By the cascade conditions, and the positivity and $d$-divisibility of the $\alpha_i$'s, we have 
  \begin{equation}\label{z}
    z_i(C)\neq z_j(C)\quad\text{for}\quad 1\leq i<j\leq d.
  \end{equation}
  By \eqref{perm:eq},
  \begin{equation}\label{z:perm}
    z(\sigma.C)=(z_{\sigma^{-1}(1)}(C),z_{\sigma^{-1}(2)}(C),\ldots,z_{\sigma^{-1}(d)}(C)).  
  \end{equation}
  From \eqref{z} and \eqref{z:perm}, for each $C\in \C(\blambda,\alpha)$, we have 
  \begin{equation}
    \sigma.C=C\text{ if and only if }\sigma=1.
  \end{equation}
  This concludes the proof of \eqref{Thm:free}.
   
  \subsection*{\eqref{Thm:weight}}
  Fix a cascade $C\in\C(\blambda,\alpha)$ and
  a permutation $\sigma\in S_d$, so
  $\sigma.C\in \C(\blambda,\alpha)$ by~\eqref{Thm:action}.
  Let $p^1,p^2,\ldots,p^{d\ell(\lambda)}$ be the paths in $C$, so  
  $p^1_1<p^2_1<\dots$ and
  \begin{equation}\label{pi:img}
    \pi_C(i)=p^i_m,
  \end{equation}
  and let $q^1,q^2,\ldots,q^{d\ell(\lambda)}$ be the paths in $\sigma.C$,
  so $q^1_1<q^2_1<\dots$ and
  \begin{equation}\label{s:pi:img}
    \pi_{\sigma.C}(i)=q^i_m.
  \end{equation}
  Let $\gamma$ be the permutation in $S_L$ given by 
  \begin{equation}
    \gamma(i+dk)=\sigma(i)+dk,\quad 1\leq i\leq d,\quad 0\leq k\leq L/d-1.
  \end{equation}
  By \eqref{perm:eq}, the sequences 
  \begin{equation}
    \sigma.p^i
    =
    (\gamma(p^i_1),\gamma(p^i_2),\ldots,\gamma(p^i_m)),
    \quad
    1\leq i\leq d\ell(\lambda),
  \end{equation}
  are the paths of $\sigma.C$, in some order.
  Let $\omega$ be the permutation in $S_{d\ell(\lambda)}$ given by 
  \begin{equation}
    \omega(i+dk)=\sigma(i)+dk,\quad 1\leq i\leq d,\quad 0\leq k\leq \ell(\lambda)-1.
  \end{equation}
  Then by \eqref{block:eq}, for each $i$, 
  \begin{equation}\label{qsp}
    q^{\omega(i)}=\sigma.p^i.
  \end{equation}
  Since $C_m=(1,\ldots,1,0,\ldots,0)$ with $d\ell(\lambda)$ $1$'s,
  we also have $\gamma(p^i_m)=\omega(p^i_m)$, so
\begin{equation}\label{qpe}
  q^{\omega(i)}_m=\omega(p^i_m).
\end{equation}
By \eqref{pi:img}, \eqref{s:pi:img}, and \eqref{qpe}, 
the permutation $\pi_{\sigma.C}$
takes $\omega(i)$ to $\omega(\pi_C(i))$ for each~$i$, i.e.\
\begin{equation}
\pi_{\sigma.C}=\omega \pi_C\omega^{-1}.
\end{equation}
So $\pi_{\sigma.C}$ and $\pi_C$ have the same sign.
By Lemma~\ref{Lemma:Wt:Sgn}, we conclude that 
\begin{equation}
\wt(\sigma.C)=\wt(C)
\end{equation}
for all $\sigma\in S_d$ and $C\in\C(\blambda,\alpha)$. This concludes the proof of \eqref{Thm:weight} and Theorem~\ref{Thm:Action}.
\end{proof}

It is worth remarking that Theorem~\ref{Thm:Action} and Lemma~\ref{Lemma:Corresp} together
give a weight-preserving free action on rim hook tableaux:
\begin{corollary}\label{Cor:Action}
  For any partition $\lambda$ of a positive integer $n$, and
  any sequence $\alpha$ of positive $d$-divisible integers summing
  to $d^2n$, there is a well-defined action of $S_d$ on
  $\T(\blambda,\alpha)$ given by
  $\sigma.T=\Theta(\sigma.\Theta^{-1}(T))$,  
  and this action is both free and weight-preserving.\qed
\end{corollary}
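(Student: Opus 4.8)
The plan is to obtain the corollary by transporting the $S_d$-action of Theorem~\ref{Thm:Action} across the bijection $\Theta$ of Lemma~\ref{Lemma:Corresp}, so that essentially no new argument is required. First I would record that $\Theta$ preserves shape and content, hence restricts to a bijection from $\C(\blambda,\alpha)$ onto $\T(\blambda,\alpha)$; in particular $\Theta^{-1}(T)\in\C(\blambda,\alpha)$ for every $T\in\T(\blambda,\alpha)$, so the formula $\sigma.T=\Theta\bigl(\sigma.\Theta^{-1}(T)\bigr)$ is well defined, the inner dot being the action on cascades supplied by Theorem~\ref{Thm:Action}.

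Next I would check that $\sigma.T$ is a group action. This is the standard fact that conjugating a $G$-action on a set by a bijection onto another set produces a $G$-action on the latter: one has $1.T=\Theta(\Theta^{-1}(T))=T$, and $\sigma.(\tau.T)=\Theta\bigl(\sigma.(\tau.\Theta^{-1}(T))\bigr)=\Theta\bigl((\sigma\tau).\Theta^{-1}(T)\bigr)=(\sigma\tau).T$, using that $\Theta^{-1}\circ\Theta$ is the identity together with part~\eqref{Thm:action} of Theorem~\ref{Thm:Action}. Freeness transports in the same way: if $\sigma.T=T$, then $\Theta\bigl(\sigma.\Theta^{-1}(T)\bigr)=\Theta\bigl(\Theta^{-1}(T)\bigr)$, so $\sigma.\Theta^{-1}(T)=\Theta^{-1}(T)$ by injectivity of $\Theta$, whence $\sigma=1$ by part~\eqref{Thm:free} of Theorem~\ref{Thm:Action}.

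Finally, for weight-preservation I would use that $\wt\circ\Theta=\wt$ by Lemma~\ref{Lemma:Corresp}, hence also $\wt\circ\Theta^{-1}=\wt$, giving
\[
  \wt(\sigma.T)=\wt\bigl(\Theta(\sigma.\Theta^{-1}(T))\bigr)=\wt\bigl(\sigma.\Theta^{-1}(T)\bigr)=\wt\bigl(\Theta^{-1}(T)\bigr)=\wt(T),
\]
the middle equality being part~\eqref{Thm:weight} of Theorem~\ref{Thm:Action}. I do not anticipate any real obstacle; the only step that even asks for a moment's thought is the first, namely that $\Theta$ maps $\C(\blambda,\alpha)$ \emph{onto} $\T(\blambda,\alpha)$ rather than merely into a larger set of rim hook tableaux, and this is immediate from the shape- and content-preservation already recorded in Lemma~\ref{Lemma:Corresp}. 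So the corollary is a formal consequence of Theorem~\ref{Thm:Action} and Lemma~\ref{Lemma:Corresp}.
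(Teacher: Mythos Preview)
Your proposal is correct and matches the paper's approach exactly: the paper simply remarks that the corollary follows from Theorem~\ref{Thm:Action} and Lemma~\ref{Lemma:Corresp} together and appends a \qed, leaving the routine transport-of-structure argument implicit. You have spelled out precisely those details, and nothing more is needed.
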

\noindent {\it Example.}
With $d=3$ and $\lambda=(3,2)$,
the following shows 
an $S_d$-orbit of a cascade $C$ and corresponding rim hook tableau $T$
of shape $\blambda$ and content $(3,3,6,6,3,3,6,9,3)$.
\newpage
\begin{center}
\begin{tabular}{@{\hspace{.065\textwidth}}c@{\hspace{.15\textwidth}}c@{\hspace{.15\textwidth}}c@{\hspace{.065\textwidth}}}
  \toprule
  $\sigma$ & diagram of $\sigma.C$ & $\sigma.T$ \\
  \midrule
  \ink{$1$}           & \ink{\includegraphics[scale=.88]{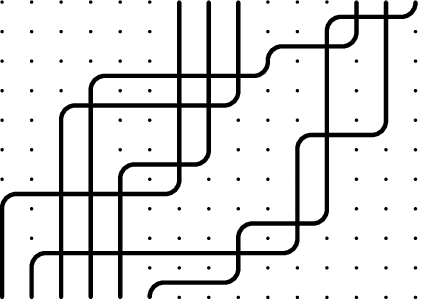}} & \ink{\includegraphics{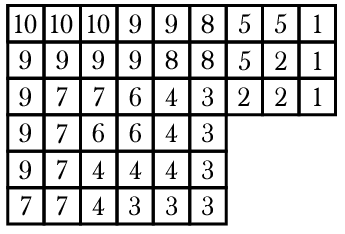}}\\
  \ink{$(1\, 2)$}     & \ink{\includegraphics[scale=.88]{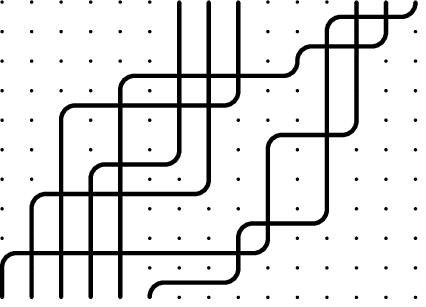}} & \ink{\includegraphics{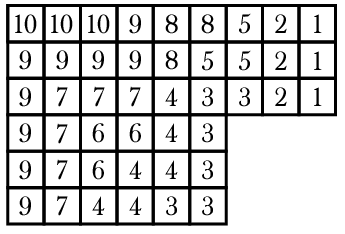}}\\
  \ink{$(1\, 3)$}     & \ink{\includegraphics[scale=.88]{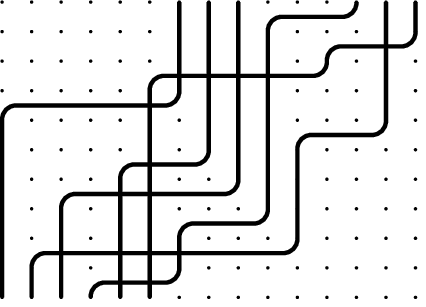}} & \ink{\includegraphics{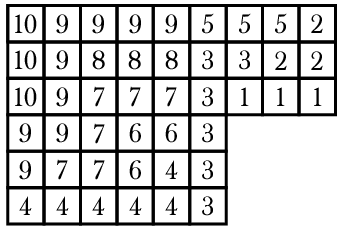}}\\
  \ink{$(2\, 3)$}     & \ink{\includegraphics[scale=.88]{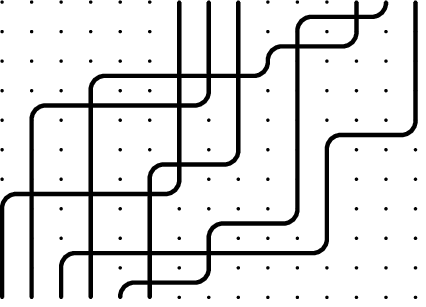}} & \ink{\includegraphics{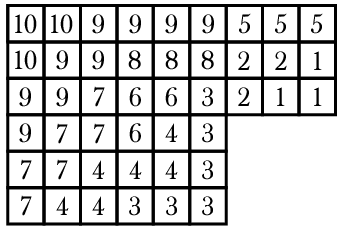}}\\
  \ink{$(1\, 2\, 3)$} & \ink{\includegraphics[scale=.88]{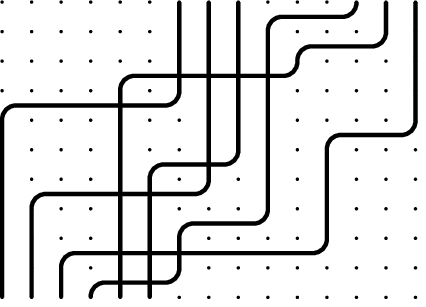}} & \ink{\includegraphics{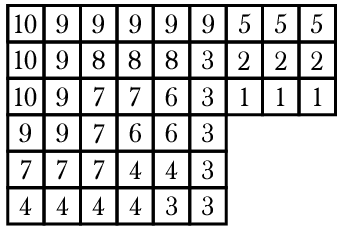}}\\
  \ink{$(1\, 3\, 2)$} & \ink{\includegraphics[scale=.88]{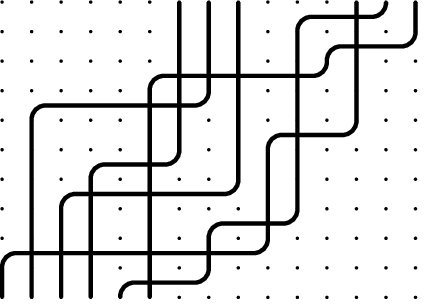}} & \ink{\includegraphics{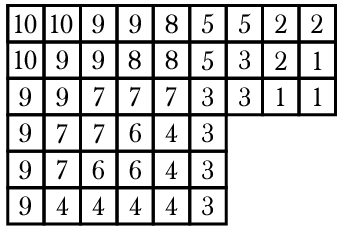}}\\
  \bottomrule
\end{tabular}
\end{center}
\newpage

\subsection*{Proof of Theorem~\ref{Thm:main}}
  For \eqref{General:Mod:Vanish}, let $\lambda$ be a partition of a positive integer $n$, and let $\mu$ be
  a partition of $dn$. By Proposition~\ref{MN:Lattice-Paths}, we have
  \[
    \chi_{\blambda}(d.\mu)=\sum_{C\in \C(\blambda,d.\mu)}\wt(C),
  \]
  and by Theorem~\ref{Thm:Action} there exists a weight-preserving free action of $S_d$
  on $\C(\blambda,d.\mu)$.
  So $\chi_{\blambda}(d.\mu)$ is divisible by $d!$. This completes the proof of \eqref{General:Mod:Vanish}.

  \eqref{Mod:Vanish} is a special case of \eqref{General:Mod:Vanish}: let $\lambda$ and $\mu$ be partitions of a positive integer $n$,
  write $\mu=1^{m_1}2^{m_2}\ldots n^{m_n}$, and define $\nu=1^{dm_1}2^{dm_2}\ldots n^{dm_n}$,
  so that $\nu$ is a partition of $dn$ with $d.\nu=\bmu$, and hence by \eqref{General:Mod:Vanish},
  $\chi_{\blambda}(\boldsymbol\mu)$ is divisible by $d!$.

  For \eqref{Ord:Vanish},
  let $\lambda$ and $\mu$ be partitions of an integer $n$ not divisible by~$d$. 
  Suppose that there exists a cascade $C\in \C(\blambda,d^2.\mu)$, 
  let $D$ be the matrix with columns
  \[{\rm Col}_1(C),{\rm Col}_{d+1}(C),{\rm Col}_{2d+1}(C),\dots,\]
  occurring in that order, and let $C'$ be the matrix obtained
  from $D$ by deleting redundant rows.
  Then 
  $C'\in \C(\lambda,d.\mu')$ for some partition $\mu'$, 
  hence $n=d|\mu'|$. So
  ${\C(\blambda,d^2.\mu)=\emptyset}$,
    hence by Proposition~\ref{MN:Lattice-Paths}, $\chi_{\blambda}(d^2.\mu)$ equals $0$.
    \qed


\begin{thebibliography}{20}
\bibitem{Burnside}
  W.~Burnside,
  On an arithmetical theorem connected with roots of unity, and its
  application to group-characteristics.
  Proc.\ Lond.\ Math.\ Soc.\ {\bf 1} (1904) 112--116.

\bibitem{Comet}
  S.\ Com\'et,
  Notations for partitions.
  Math.\ Tables Aids Comput.\ {\bf 9} (1955) 143--146.
  
  
\bibitem{Frobenius}
  G.~Frobenius,
  \"Uber die Charaktere der symmetrischen Gruppe.
  Sitzungsberichte der K\"oniglich Preu{\ss}ischen Akademie der Wissenschaften zu Berlin, 1900, 516--534.

\bibitem{Gallagher}
  P.~X.~Gallagher,
  Degrees, class sizes and divisors of character values.
  J.\ Group Theory {\bf 15} (2012) 455--467.

\bibitem{GLM}
  P.~X.~Gallagher, M.~J.~Larsen, and A.~R.~Miller,
  Many zeros of many characters of ${\rm GL}(n,q)$.
  Int.\ Math.\ Res.\ Not.\ IMRN, in press.
  
\bibitem{Gluck}
  D.~Gluck,
  Parity in columns of the character table of $S_n$.
  Proc.\ Amer.\ Math.\ Soc.\ {\bf 147} (2019) 1005--1011.
  
  
\bibitem{Isaacs}
  I.~M.~Isaacs,
  {\it Character theory of finite groups}.
  Dover, New York, 1994.

\bibitem{LM}
  M.~J.~Larsen and A.~R.~Miller,
  The sparsity of character tables of high rank groups of Lie type.
  Preprint arXiv:2006.00847.
  
\bibitem{Macdonald}
  I.~G.~Macdonald,
  On the degrees of the irreducible representations of symmetric groups,
  Bull.\ London Math.\ Soc.\ {\bf 3} (1971) 189--192.

\bibitem{MacdonaldBook}
  I.~G.~Macdonald,
  {\it Symmetric Functions and Hall polynomials}, 2nd ed.\
  Oxford University Press, 1995. 
  
\bibitem{McKay}
  J.~McKay,
  Irreducible representations of odd degree.
  J.\ Algebra {\bf 20} (1972) 416--418.

\bibitem{Miller1}
  A.~R.~Miller,
  The probability that a character value is zero for the symmetric group.
  Math.\ Z.\ {\bf 277} (2014) 1011--1015.
  
\bibitem{Miller2}
  A.~R.~Miller,
  On parity and characters of symmetric groups.
  J.\ Combin.\ Theory Ser.\ A {\bf 162} (2019) 231--240.

\bibitem{Morotti}
  L.~Morotti,
  On divisibility by primes in columns of character tables of symmetric groups.
  Arch.\ Math.\ (Basel) {\bf 114} (2020) 361--365.
  
\bibitem{Murnaghan}
  F.\ D.\ Murnaghan,
  The characters of the symmetric group.
  Amer.\ J.\ Math.\ {\bf 59} (1937) 739--753.

\bibitem{Nakayama}
  T.\ Nakayama,
  On some modular properties of
  the irreducible representations of a symmetric group I, II.
  Jap.\ J.\ Math.\ {\bf 17} (1941) 165--184 and 411--423.

\bibitem{Peluse}
  S.~Peluse,
  On even entries in the character table of the symmetric group.
  Preprint arXiv:2007.06652.

\bibitem{PS}
  S.~Peluse and K.~Soundararajan,
  Almost all entries in the character table of the symmetric group are multiples of any given prime.
  Preprint arXiv:2010.12410.
  
\bibitem{Siegel}
  C.~L.~Siegel,
  The trace of totally positive and real algebraic integers.
  Ann.\ of Math.\ {\bf 46} (1945) 302--312.
\end{thebibliography}
\end{document}